\title{On the study of decompositions of forms in four variables}
\newcommand{\C}{\mathbb{C}}
\newcommand{\Pj}{\mathbb{P}}
\newtheorem{theorem}{Theorem}[section]
\newtheorem{proposition}[theorem]{Proposition}
\theoremstyle{definition}
\newtheorem{example}[theorem]{Example}
\newtheorem{remark}[theorem]{Remark}
\subjclass[2020]{14N07, 15A69, 15A72.}
\author{Luca Chiantini}
\address{Dipartimento di Ingegneria dell'Informazione e Scienze matematiche, Universit\'a di Siena (Italy)}
\email{luca.chiantini @unisi.it}
\thanks{Al mio amico Giorgio, con cui ho festeggiato le vittorie della Lupa nel 2016}
\begin{document}
\abstract{In the space of sextic forms in $4$ variables with a decomposition of length $18$ we determine and describe a closed subvariety which contains
all non-identifiable sextics. The description of the subvariety is geometric, but one can derive from that an algorithm which can guarantee that a given form is identifiable.}

}

\maketitle

\section{Introduction}

The paper describes an application of geometric tools, mainly from the theory
of finite sets in projective spaces, to the study of Waring decompositions of  forms.\\
The tools have been introduced and employed, in a series of papers, mainly for forms of degree $4$
or for forms in three variables (see  \cite{AngeCVan18}, \cite{AngeC20}, \cite{AngeC22}, \cite{AngeC}, \cite{COtt}).
Since quaternary forms of degree $5$  are considered in a forthcoming paper \cite{BCDMP},
we turn now our attention to forms of degree $6$ in four variables.\\
Our starting point is the same starting point of the celebrated Kruskal's criterion for the minimality and
uniqueness of a decomposition (to be precise, in its version for symmetric tensors). 
We assume that we know a (Waring) expression
of a form $F$ in terms of powers of linear forms,  as the one given in formula \eqref{expr} below. 
The problem consists of determining if the expression is minimal, in which case it
computes the Waring rank of F. In addition, one would like to know if the expression
is unique (up to trivialities). \\
We attack the problem by considering the linear forms appearing in the expression as a set of points $A$ in a projective space $\Pj^3$, and 
analyzing the existence of another set $B$, of length smaller or equal than the length of $A$, whose
$6-$Veronese image spans $F$.\\
It turns out that the union $Z=A\cup B$ must satisfy several geometric and algebraic restrictions. This makes it
possible to analyze the situation up to rank $18$. Indeed, we prove that when the length (= cardinality) of $A$ is strictly smaller than $18$
and $A$ is sufficiently general in a very precise sense (see the statement of Proposition \ref{base16}), then 
the expression is necessarily unique. The geometric situation in this case is similar to the one treated in Kruskal's criterion
(which, by the way, even in its reshaped version described in \cite{COttVan17b}, cannot work for $r>14$ in the case of quaternary sextics).\\
The case $r=18$ turns out to be different. For $r=18$, even if $A$ is completely general, there are forms
in the span of the $6-$Veronese image of $A$ for which a second decomposition $B$ exists. We can be more specific: 
when $A$ is general, so that cubic surfaces through $A$ define a complete intersection irreducible curve $C$ of degree $9$,
 then $B$ is forced to be residual to $A$ in a complete intersection of $C$ and a quartic surface. This allows us to parameterize 
 the possible sets $B$, and thus parameterize
 a (locally closed) subvariety $\Gamma$ of the span of $v_6(A)$, which contains the forms $F$ of degree $6$ in $4$ variables, rank $18$,
 which are not identifiable. The closure of $\Gamma$ is the image of a map from a subspace of the projective 
 space $\Pj((I_A)_4)$ to $\langle v_6(A)\rangle$. We refer to Theorem \ref{main} for a more precise description.\\
 In particular, we get that if $F$ is a non-identifiable form, then the second decomposition $B$ is bounded to
 an invariant curve $C$, defined by $A$. This is a case of {\it confinement} for decompositions of forms, as described in general
 in \cite{AngeBocciC18}.\\  
 Since the generic rank of a form of degree $6$ in four variables is $21$, one may wonder what happens for the missing cases  $r=19,20,21$.
 For $r=19$, the same procedure proves that a hypothetical second decomposition $B$ must be bounded to the unique cubic surface defined by
 $A$, but we are not able to characterize it any more. For $r=20,21$ we have no precise characterization. This is probably
 due to the fact that the theory of finite sets in $\Pj^3$ is far from being completely understood, and also opens a 
 series of questions on the structure of finite sets in higher dimensional spaces, which could suggest directions to
 investigators in the field.
 
 \section {Preliminaries}
 
 All polynomials in the paper are defined over the complex field. \\
 We will often, by abuse,
 use the same letter to indicate a form in a polynomial ring, the projective hypersurface defined by the form, and
 the point defined by the form in the corresponding projective space.
 \smallskip
 
 Given a finite set $A$ in a projective space, we denote by $\ell(A)$ its  length (i.e. its cardinality).
\smallskip
 
 Consider a form $F$ of degree $6$ in $4$ variables, over the complex field.
 
 Assume we know a Waring expression of $F$ (of length $r$) as a linear combination of powers of linear forms
\begin{equation}\label{expr} F = \sum_{i=1}^r a_i L_i^6 \end{equation}
 but we do not know a priori if the expression is minimal or unique (up to trivialities). Thus we do not know if $r$
 is the (Waring) rank of $F$, and we do not  know whether $F$ is identifiable or not.
 \smallskip
 
 On the other hand, we can certainly assume that the expression is {\it non-redundant}, in the sense that
 the powers $L_i^6$'s are linearly independent and no coefficient $a_i$ is $0$. 
 \smallskip
 
 Call $A=\{L_1,\dots, L_r\}$ the set of linear forms involved in the expression, 
 considered as points in a projective space $\Pj^3$.
 If we denote with $v_d:\Pj^3\to \Pj^N$ the $d$-Veronese map, the expression tells us 
 that $F$ (as we said above, identified by abuse with one point
 of the space $\Pj^{83}$ of sextic forms in $\Pj^3$) belongs to the span of the Veronese image
 $v_6(A)$. The non-redundancy of $A$ is equivalent to saying that, for all {\it proper} subsets $A'\subset A$,
$F$ is not contained in the span of $v_6(A')$.\\
 
 We have full control on the set $A$, so we may assume that we know all its invariants.
 Thus we can assume that
  
 $$ (*) \quad A \mbox{ is in } General\ Position  \ (GP)$$
 which, in this setting, means that all subsets  of $A$ have maximal Hilbert function.\\
 Notice that if $A$ has this property, then all subsets of $A$ also have it.
 \smallskip

 \begin{remark}\label{Kru} When $r\leq 14$, then the celebrated Kruskal's criterion, in its reshaped version
 (see \cite{COttVan17b}) guarantees that $r$ is the rank of $F$, and the expression is unique (up to trivialities: product by a scalar or reordering).\\ 
 Namely, if $u=\min\{r,10\}$ then necessarily
 $$ r\leq \frac{u+u+u-2}2,$$
 thus we can take a partition $6=2+2+2$ and consider $F$ as a tensor of 
 $Sym^2(\C^4)\otimes Sym^2(\C^4)\otimes Sym^2(\C^4) $. Since the 
 second Kruskal's rank of $A$ is $u$ by the genericity assumption, then a direct application
 of Kruskal's criterion guarantees that \eqref{expr} is the unique expression of $F$ of length $r$.
 \end{remark}
 
 When $r>14$, we assume the existence of another expression
 \begin{equation}\label{expr2} F = \sum_{i=1}^s a_i M_i^6, \quad s\leq r \end{equation}
 and call $B=\{M_1,\dots,M_s\}$ the consequent finite set in $\Pj^3$.
 
 Again we may directly assume that also $B$ is non-redundant.

 When $r\geq 15$ the Kruskal's criterion cannot provide a proof of the minimality and uniqueness of the expression \eqref{expr}.
 Indeed in this case new expressions are possible. 
 A finer geometrical analysis is required to understand the situation.
 \smallskip
 
 Call $h_A$, $h_B$, $h_Z$ the Hilbert functions of $A,B$ and $Z=A\cup B$ respectively.\\
 By assumptions we know that the difference $Dh_A(i)=h_A(i)-h_A(i-1)$ is defined by the following table

\begin{center}\begin{tabular}{c|ccccccc}
$i$ &             $0$ & $1$ & $2$ &   $3$ &   $4$ &   $5$  & $ \dots $ \\  \hline
$Dh_A(j)$ & $1$ & $3$ &   $6$ &   $r-10$ &   $\max\{0,r-20\}$ & $0$  & $ \cdots $ \cr
\end{tabular}\end{center}

From \cite{AngeC20} Proposition 2.19, we know that 
$$\dim(\langle v_6(A)\rangle\cap \langle v_6(B)\rangle) = \ell(A \cap B) - 1 + h^1_Z(6).$$
where $h^1_Z(i)$ is defined by $h^1_Z(i)=\ell(Z)-h_Z(i)$.\\
In particular $h_Z(6)<\ell(Z)$ when $A,B$ are disjoint.
\smallskip

We recall the Cayley-Bacharach property of $Z$ from \cite{AngeCVan18} and \cite {AngeC20}, Section 2.4.

\begin{remark}\label{CB}  Since $A,B$ are both non-redundant, if $A\cap B=\emptyset$ then the set $Z$ satisfies
the \emph{Cayley-Bacharach property}. In particular for $j=0,1,2,3$,
$$ \sum_{i=0}^j Dh_Z(i) \leq \sum_{i=0}^j Dh_Z(7-j).$$
\end{remark}

\begin{proposition}\label{minim} Assume $r\leq 20$. Then $s=\ell(B)\geq r$. If $r=15$ then $A,B$ are disjoint.
Moreover, for all $r$ the ideals of $A$ and $Z$ agree up to degree $3$.
\end{proposition}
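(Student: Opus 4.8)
The plan is to exploit the Cayley--Bacharach property of $Z=A\cup B$ (Remark \ref{CB}) together with the known shape of $Dh_A$ and the dimension formula from \cite{AngeC20}. First I would dispose of the case $A\cap B\ne\emptyset$: if $A,B$ share a point, then a non-redundancy argument shows we may replace $F$ by a form of smaller rank with decompositions $A\setminus(A\cap B)$ and $B\setminus(A\cap B)$, reducing to a situation already under control; so for the core of the argument we assume $A\cap B=\emptyset$. Then $\ell(Z)=\ell(A)+\ell(B)=r+s$, and by the dimension formula $h^1_Z(6)=\dim(\langle v_6(A)\rangle\cap\langle v_6(B)\rangle)+1>0$ since $F$ lies in that intersection; equivalently $h_Z(6)<\ell(Z)$, so $Z$ imposes dependent conditions on sextics and fails to have maximal Hilbert function in degree $6$.

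Next I would run the Cayley--Bacharach inequalities of Remark \ref{CB}. For $A$ in $GP$ with $r\le 20$, the table gives $Dh_A=(1,3,6,r-10,\max\{0,r-20\},0,\dots)$, so $h_A(3)=\min\{r,20\}$ and, crucially, $h_A$ stabilises at $r$ by degree $4$. The key step is to show $Dh_Z(i)=Dh_A(i)$ for $i\le 3$, i.e. $B$ imposes no new conditions on cubics beyond those already imposed by $A$; this is exactly the statement that the ideals of $A$ and $Z$ agree up to degree $3$. I would argue this by combining two facts: (i) since $Z\supseteq A$ we have $Dh_Z(i)\ge Dh_A(i)$ for small $i$ as long as $Z$ is still ``growing'' — more precisely $h_Z(i)\ge h_A(i)$ always, with the relevant control coming from $h^1$; and (ii) the Cayley--Bacharach bound $\sum_{i=0}^3 Dh_Z(i)\le \sum_{i=0}^3 Dh_Z(i)$... rather, the $j$-versions for $j=0,1,2,3$ force the first four values of $Dh_Z$ not to overshoot what the symmetric tail (values $Dh_Z(7-j),\dots$) can support. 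Since $h_Z(6)<\ell(Z)$ the function $Dh_Z$ must already be zero somewhere $\le 6$, so the tail is short; feeding this into the CB inequalities caps $h_Z(3)\le h_A(3)$, and as the reverse inequality is automatic we get equality of Hilbert functions, hence of ideals, in degrees $\le 3$.

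Finally, to get $s\ge r$: once $(I_A)_{\le 3}=(I_Z)_{\le 3}$, the set $B$ lies on every cubic through $A$. When $r\le 19$ there is essentially a unique (or at least a small positive-dimensional) system of cubics through $A$, and when $r=15$ one additionally checks $A\cap B=\emptyset$ is forced because a shared point would drop $r$ below $15$ and land us in the Kruskal range of Remark \ref{Kru}, contradicting the existence of a genuinely second decomposition. For the general inequality $s\ge r$: if $s<r$ then $\ell(Z)=r+s<2r\le 40$, $Dh_Z$ is even shorter, and the Cayley--Bacharach balance between head and tail of $Dh_Z$ becomes impossible — the head is forced to carry $1+3+6+\cdots$ worth of conditions that the truncated tail cannot match. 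I expect the main obstacle to be the bookkeeping in this last comparison: one must track $Dh_Z$ across degrees $3,4,5,6$ simultaneously, using that $h^1_Z(6)>0$ to bound where $Dh_Z$ vanishes, and then verify the CB inequalities are violated for every admissible profile with $s<r$. This is a finite but somewhat delicate case analysis in the values of $r$ from $15$ to $20$.
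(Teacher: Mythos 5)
Your overall strategy --- subtract the shared powers to reduce the non-disjoint case, then run the Cayley--Bacharach inequalities of Remark \ref{CB} on $Z=A\cup B$ in the disjoint case --- is the same as the paper's. But the central step contains a claim that is actually false, and the decisive inequality is never carried out. The false claim is: ``since $h_Z(6)<\ell(Z)$ the function $Dh_Z$ must already be zero somewhere $\le 6$, so the tail is short.'' This is backwards: $h_Z(6)<\ell(Z)$ says precisely that $\sum_{i=0}^{6}Dh_Z(i)<\ell(Z)$, i.e.\ that $Dh_Z(7)>0$ and the tail is \emph{long}. Luckily you do not need any such statement; the only bound on the tail required is the trivial one, $\sum_{i=4}^{7}Dh_Z(i)\le\ell(Z)-h_Z(3)$, coming from $\sum_{i\ge 0}Dh_Z(i)=\ell(Z)$.

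The second problem is that you defer the inequality $s\ge r$ to ``a finite but somewhat delicate case analysis in the values of $r$ from $15$ to $20$,'' which means the proof is not actually closed. No case analysis is needed: the $j=3$ Cayley--Bacharach inequality gives $h_Z(3)=\sum_{i=0}^{3}Dh_Z(i)\le\sum_{i=4}^{7}Dh_Z(i)\le\ell(Z)-h_Z(3)$, so $2h_Z(3)\le r+s$; since $Z\supseteq A$ and, for $r\le 20$, one has $h_A(3)=1+3+6+(r-10)=r=\ell(A)$ (this is the \emph{only} place $r\le 20$ enters, via $Dh_A(4)=\max\{0,r-20\}=0$), we get $h_Z(3)\ge r$ and hence $s\ge r$ in one line. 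The equality case $s=r$ then forces $h_Z(3)=h_A(3)$, which is exactly the assertion that $I_A$ and $I_Z$ agree up to degree $3$. Your plan inverts this order (ideal equality first, then $s\ge r$); that can be made to work, but only because the standing hypothesis $s\le r$ already gives $\ell(Z)\le 2r$ --- without invoking that, your step ``caps $h_Z(3)\le h_A(3)$'' has no proof. Finally, a minor imprecision in the reduction: the first decomposition of the auxiliary form $F'$ is not $A\setminus(A\cap B)$, since the coefficients $a_i-b_i$ at shared points need not vanish; its length is only bounded below by $r-j$, and one should argue (as the paper does for $r=15$, via Kruskal or the disjoint case) that these coefficients do vanish, or proceed by induction on $r$ using the disjoint case applied to $F'$.
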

\begin{proof} If $A\cap B=\emptyset$, then by Remark \ref{CB} we must have:
$$ \ell(Z)=\ell(A)+\ell(B)\geq\sum_{i=0}^7 Dh_Z(i)\geq 2\sum_{i=0}^3 Dh_Z(i)\geq 2\sum_{i=0}^3 Dh_A(i)=2\ell(A)$$
which proves $s\geq r$. If $r=s$, the  inequalities become equalities, and this implies the result on the ideals of $A$ and $Z$.\\
Assume $A\cap B\neq \emptyset$, i.e. assume $L_i=M_i$ for 
 $i=1,\dots,j$, for some $j>0$. \\
 Then 
  \begin{align*}
 F &= a_1L_1^6+\dots +a_jL_j^6+a_{j+1}L_{j+1}^6+\dots +a_rL_r^6 \\
 &= b_1L_1^6+\dots +b_jL_j^6+b_{j+1}M_{j+1}^6+\dots +b_rM_r^6.
 \end{align*}
Define $F'$ by
  \begin{align*}
 F' = (a_1-b_1)L_1^6+\dots +(a_j-b_j)L_j^6 &+a_{j+1}L_{j+1}^6+\dots +a_rL_r^6 \\
 &=b_{j+1}M_{j+1}^6+\dots +b_sM_s^6
 \end{align*}
 $F'$ has two disjoint decomposition. The former can have some vanishing coefficients, but its length
  in any case is at least $r-j$, while the latter has length $\leq s-j$.\\
  If $r=15$ we obtain a contradiction by the reshaped Kruskal's criterion (Remark \ref{Kru}) or by what we concluded above in
  the disjoint case.  Then, arguing by induction on $r$, we get that $s\geq r$. \\
  If $A_0,B_0$ are the two decompositions of $F'$ defined above, then by induction 
   the ideals of $A_0$ and $Z_0=A_0\cup B_0$ agree
  up to degree $3$.  Since $A,B$ are obtained from $A_0,B_0$ by adding the same subset $S$, then also the
  ideals of $A$ and $Z$ agree up to degree $3$.
 \end{proof}
 
The  minimality of the expression \eqref{expr} proved in the previous result indeed also follows
from Theorem 1.2 of  \cite{Ball19}, or by \cite{MourOneto20} Theorem 3.1.

\section{The case $r=15$}

We know from Proposition \ref{minim} and its proof  that if $F$ has two decompositions $A,B$, then $A\cap B=\emptyset$. \\
We show an example in which  the second decomposition $B$  exists.
 
 \begin{example}\label{ell5} Assume that $A$ is a general set of $15$ points in a general elliptic quintic curve $C$.
 The $6$-Veronese map maps $C$ to a normal elliptic curve of degree $30$ which spans a $\Pj^{29}$. In $\Pj^{29}$
a general point has two different decompositions with respect to the elliptic curve $C$ (see Proposition 5.2  of \cite{CCi06}). Thus one gets that a
general $F$ in the span of $v_6(A)$ has exactly two different decompositions.\\
 It is easy indeed to construct examples of forms $F$ with two decompositions of this type. A general set $A$
 of $15$ points in an elliptic quintic and a general $F$ in the span of $v_6(A)$ will do.
 \end{example}

On the other hand, it is also simple to realize that a general set $A$ of $15$ points in $\Pj^3$ does not lie in an elliptic quintic.
This is just a count of parameters: the Hilbert scheme of elliptic quintics has dimension $5\cdot 4 =20$, 
so the sets of $15$ points in such curves cannot depend on more than $20+15=35$ parameters; on the other hand, the family of sets of $15$
points in $\Pj^3$ has dimension $45$.
\smallskip

One can easily exclude that a given set $A$ of $15$ points in $\Pj^3$ lies in an elliptic quintic by
considering the base locus of the system of cubics through $A$ which, by assumption, has dimension $5$.

\begin{proposition}\label{base15} Assume $r=15$ and assume that the base locus of the system of cubics 
through $A$ contains no curves. Then $A$ is the unique minimal decomposition of $F$. 
\end{proposition}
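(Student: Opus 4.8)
The plan is to show that if $A$ is a set of $15$ points in $\Pj^3$ in general position whose cubic system has no curves in its base locus, then no second decomposition $B$ can exist. By Proposition \ref{minim} we already know that any second decomposition $B$ satisfies $\ell(B)\geq 15$, that $A$ and $B$ are disjoint, and that the ideals of $A$ and of $Z=A\cup B$ agree up to degree $3$. So the first step is to extract arithmetic information about $Z$. We have $Dh_A=(1,3,6,5,0,\dots)$, and since $I_A$ and $I_Z$ coincide in degrees $\le 3$ we get $Dh_Z(i)=Dh_A(i)$ for $i\le 3$, i.e. $h_Z(3)=15$. Combined with the Cayley–Bacharach inequalities of Remark \ref{CB} and the numerical constraint $\ell(Z)=\ell(A)+\ell(B)=15+s$ with $s\ge 15$, together with $h^1_Z(6)>0$ (needed so that $\langle v_6(A)\rangle\cap\langle v_6(B)\rangle$ contains $F$, via the formula of \cite{AngeC20} Prop. 2.19 quoted above, recalling $A\cap B=\emptyset$), I expect to pin down that $s=15$ exactly and that $Dh_Z=(1,3,6,5,5,3,\dots)$ or a similarly forced shape. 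The key consequence I am after: $h_Z(3)=15=h_A(3)$, so $A$ and $Z$ have the \emph{same} $5$-dimensional linear system of cubics.

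The second, geometric step uses the hypothesis. The space of cubics through $A$ has (projective) dimension $4$, i.e. it is a $5$-dimensional linear system; its base locus is a finite set by assumption. I claim that base locus is exactly $A$. Indeed $A$ is contained in it, and $A$ is in general position, so a general pencil inside the system cuts out, on a general cubic of the system, a curve of degree $9$; by Bézout and the genericity of $A$ the full base locus of the system has length $15$ (a general net of cubics through $15$ general points has base locus exactly those $15$ points — one checks this via the Hilbert function: $h_A(3)=15$ forces imposing $15$ independent conditions, and a $16$th base point would be a syzygy forcing $h_Z(3)\le 15$ with $\ell(Z)=16$, impossible for a general configuration). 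Since $B$ also lies on every cubic through $A$ (as $I_A$ and $I_Z$ agree in degree $3$), $B$ is contained in the base locus of that system, hence $B\subseteq A$. But $A$ and $B$ are disjoint and nonempty — contradiction. Therefore no second decomposition exists, and $A$ is the unique minimal decomposition of $F$.

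The main obstacle is the bookkeeping in the first step: one must rule out the possibility that $Z$ lies on a curve (e.g. the elliptic quintic scenario of Example \ref{ell5}), for it is precisely the presence of such a curve in the base locus of the cubics that allows a second decomposition. The hypothesis "no curves in the base locus of the cubics through $A$" is designed exactly to block this, but translating it cleanly into "$B\subseteq$ base locus $=A$" requires knowing that $B$ imposes no new conditions on cubics, which is the content of the equality of ideals up to degree $3$ from Proposition \ref{minim}. I would therefore organize the argument as: (i) invoke Proposition \ref{minim} to get disjointness and ideal agreement in degree $\le 3$; (ii) deduce $B$ lies on all cubics through $A$; (iii) invoke the hypothesis to conclude the base locus of those cubics is a finite set, which by the Hilbert function of $A$ must be $A$ itself; (iv) conclude $B\subseteq A$, contradicting $A\cap B=\emptyset$. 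A secondary subtlety is justifying that the finite base locus is precisely $A$ and not some scheme containing $A$ with extra (possibly embedded or reduced) points; here the general position hypothesis $(*)$, which gives $h_A(3)=15$ and maximal Hilbert function for all subsets, does the work, since any extra point $P$ would give $A\cup\{P\}$ with $h_{A\cup\{P\}}(3)=15<16$, violating general position — unless $P$ is forced, and a dimension count (as in the elliptic-quintic discussion preceding the proposition) shows no such forced point occurs for general $A$.
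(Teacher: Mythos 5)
Your overall skeleton matches the paper's: invoke Proposition \ref{minim} to get disjointness and the agreement of $I_A$ and $I_Z$ in degree $3$, deduce that $B$ lies in the base locus of the cubics through $A$, and then use the hypothesis that this base locus contains no curves to reach a contradiction. But your final step has a genuine gap. You claim that the (finite) base locus of the $5$-dimensional system $(I_A)_3$ is \emph{exactly} $A$, so that $B\subseteq A$ contradicts disjointness. That claim is not justified by the stated hypotheses: condition $(*)$ says that all \emph{subsets} of $A$ have maximal Hilbert function, and says nothing about supersets, so an extra base point $P$ with $h_{A\cup\{P\}}(3)=15$ violates nothing. Your own justification falls back on ``$A$ general'' and a dimension count, but generality of $A$ is not a hypothesis here --- the proposition is an effective criterion meant to apply to a \emph{given} $A$ once one checks that the base locus is zero-dimensional, and for such an $A$ the base locus may well be a finite set strictly containing $A$ (e.g.\ $A$ could sit inside a zero-dimensional complete intersection of three of its cubics, with some of the residual points surviving in the base locus of the full system).

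The fix is the paper's argument, which avoids identifying the base locus at all: since the base locus is zero-dimensional, three cubics of the system meet in a finite scheme, so by B\'ezout the base locus has length at most $3\cdot 3\cdot 3=27$. As $Z=A\cup B$ is contained in it, $\ell(B)\le 27-15=12$, contradicting $\ell(B)\ge 15$ from Proposition \ref{minim}. This is both weaker in what it needs (only finiteness, not an exact description of the base locus) and immediate. Your first paragraph's Hilbert-function bookkeeping ($s=15$, $h^1_Z(6)>0$, the shape of $Dh_Z$) is correct but ultimately not needed once the B\'ezout bound is in place.
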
 
\begin{proof} Assume there exists a second decomposition $B$ of length $\leq 15$. Arguing as in the final part of the 
proof of Proposition \ref{minim}, since we can apply the reshaped Kruskal's criterion for decompositions of
length $\leq 14$, we see that $A,B$ must be disjoint. We know that the ideal of $Z=A\cup B$ coincides
with the ideal of $A$ in degree $3$. Since the base locus of the system of cubics through $A$ contains no curves, then
by B\'ezout $Z$ has length at most $27$. Thus $\ell(B)\leq 12$, which is excluded by Proposition \ref{minim}.
\end{proof}

One checks easily the dimension of the base locus of the system of cubics through $A$, by standard
computer algebra packages.

 \section{The cases $r=16,17$}

 The situation for $r=16,17$ is quite similar to the case $r=15$, except that now an intersection between the two decompositions
 is allowed.
 
 \begin{example}\label{nodisj} Let $A_0$ be a general set of $15$ points lying in a general elliptic quintic curve $C$. We saw in Example \ref{ell5}
 that a general form $F_0$ in the span of $v_6(A_0)$ has a second decomposition $B_0$ of length $15$, disjoint from $A_0$.
 If $L_0$ is a general linear form, then $\{L_0\}\cup A_0$ and $\{L_0\}\cup B_0$ are two different, non-disjoint, decompositions of 
 length $16$ of $L_0^6+F_0$.\\
 Arguing as in Proposition \ref{minim}, one sees that these two decompositions are minimal, when $A_0,B_0,L_0$ are general. \\
 \end{example}
 
 Also examples with different disjoint decompositions are possible.
 
 \begin{example}\label{rat5} Let $A$ be a general set of $16$ points lying in a general rational quintic curve $C$. By B\'ezout, since $C$ is irreducible,
 the ideal of $C$ and the ideal of $A$ agree in degree $3$. The Veronese map $v_6$ maps $C$ to $\Pj^{30}$. Since no curves are defective, a general
 point $F$ of $\Pj^{30}$ has infinitely many (mostly disjoint) decompositions of length $16$ with respect to $v_6(C)$.\\
 Sets $A$ of this type lie in the \emph{Terracini locus}, as defined in \cite{BallC21}: the differential of the map from the abstract $16$-secant variety
 to the space $\Pj^{83}$ of $v_6(\Pj^3)$ drops rank over a general $F\in\langle v_6(A)\rangle$.  
 \end{example}
 
 \begin{example} Starting with forms with two decompositions of length $16$, as e.g. in Example \ref{rat5}, and adding one point as in Example 
 \ref{nodisj}, one finds easily examples of non-disjoint different decompositions of length $17$ for some sextics $F$.
 \end{example}
 
 As in the case $r=15$, if the system of cubics through $A$ has no curves in the base locus, then the decomposition $A$ of $F$ is unique.

 \begin{proposition}\label{base16} Assume $r=16$ or $r=17$ and assume that the base locus of the system of cubics 
through $A$ contains no curves. Then $A$ is the unique minimal decomposition of $F$. 
\end{proposition}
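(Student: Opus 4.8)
The plan is to mimic the proof of Proposition \ref{base15}, but with a sharper Bézout-type analysis, since now $\ell(B)\leq r\leq 17$ and the naive bound $\ell(Z)\leq 27$ only forces $\ell(B)\leq 11$, which is already a contradiction with Proposition \ref{minim} — wait, that is enough. Let me reconsider: the argument is in fact identical. First I would suppose, for contradiction, that there is a second minimal decomposition $B$ with $\ell(B)\leq r$. By Proposition \ref{minim}, $s=\ell(B)\geq r$, so $\ell(B)=r\in\{16,17\}$. Arguing exactly as in the final part of the proof of Proposition \ref{minim} — removing the common points $S=A\cap B$ and applying the reshaped Kruskal criterion (Remark \ref{Kru}) to the residual form $F'$, whose two decompositions have lengths $\geq r-j$ and $\leq s-j$ with $r-j\leq 14$ forcing disjointness of the residuals — one reduces to the case where $A_0=A\setminus S$ and $B_0=B\setminus S$ are disjoint, with $\ell(A_0)=\ell(B_0)=r-j$.

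Next I would invoke Proposition \ref{minim} again: the ideals of $A$ and $Z=A\cup B$ agree up to degree $3$. So every cubic through $A$ passes through $Z$; since by hypothesis the system of cubics through $A$ has no curve in its base locus, the base locus is a finite scheme, and by Bézout (intersecting three general cubics in the system, which is at least $5$-dimensional so one may pick three independent ones cutting out a zero-dimensional scheme containing the base locus) it has length at most $3^3=27$. Hence $\ell(Z)\leq 27$, so $\ell(B)=\ell(Z)-\ell(A)\leq 27-r\leq 11$ when $r=16$ and $\leq 10$ when $r=17$. In both cases $\ell(B)<r$, contradicting $\ell(B)=r$ established above. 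Therefore no such $B$ exists and $A$ is the unique minimal decomposition.

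The only point requiring care — and the place where the argument is slightly more delicate than in the $r=15$ case — is the reduction to the disjoint situation when $A\cap B\neq\emptyset$: one must check that after removing $S$ the residual decomposition $A_0$ is still non-redundant (or at worst has some vanishing coefficients but length still exactly $r-j$ as a set), so that the Bézout bound is applied to the correct set $Z$. But this is handled precisely as in the proof of Proposition \ref{minim}: the set-theoretic union $Z=A\cup B$ is what enters the Hilbert-function and Bézout estimates, and $Z=Z_0\cup S$ with the ideal statement for $A,Z$ following from that for $A_0,Z_0$. Since for $r=16,17$ Proposition \ref{minim} already gives both $\ell(B)\geq r$ and the degree-$3$ ideal coincidence, no new ingredient beyond the cubic base-locus hypothesis is needed, and the genericity-in-$(*)$ assumption guarantees the cubic system has the expected dimension $r-10\geq 6$ so that three independent members with zero-dimensional intersection can be chosen. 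I expect no serious obstacle; the proof is a direct combination of Proposition \ref{minim}, Remark \ref{Kru}, and Bézout, exactly parallel to Proposition \ref{base15}.
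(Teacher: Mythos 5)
Your disjoint case is exactly the paper's argument: Proposition \ref{minim} gives $\ell(B)=r$ and the coincidence of $I_A$ and $I_Z$ in degree $3$, and B\'ezout applied to three independent cubics through $A$ (whose common zero locus is finite by hypothesis) gives $\ell(Z)\le 27$, hence $\ell(B)\le 27-r<r$. The gap is in the case $A\cap B\neq\emptyset$, which you yourself flag as ``the only point requiring care'' and then dismiss. Two concrete problems. First, writing $j=\ell(A\cap B)$, your parenthetical ``$r-j\le 14$'' is false for small $j$: for $j=1$ (or $j=1,2$ when $r=17$) the residual decompositions of $F'$ have length $15$ or $16$, outside the range of the reshaped Kruskal criterion of Remark \ref{Kru}; moreover the residuals $A\setminus S$, $B\setminus S$ are disjoint by construction, so Kruskal is not needed to ``force disjointness'' --- what it would give, when it applies, is a contradiction. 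Second, your closing count $\ell(B)=\ell(Z)-\ell(A)$ presupposes $A\cap B=\emptyset$; in general $\ell(Z)=2r-j$, so the bound $\ell(Z)\le 27$ only forces $j\ge 2r-27$ and produces no contradiction by itself once $j\ge 5$ (for $r=16$). As written, the range $1\le j\le 4$ is not covered by your Kruskal step and the range $j\ge 5$ is not covered by your B\'ezout step.

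The paper closes this case differently: after cancelling the common points (the coefficients at the common points must agree, as spelled out in Proposition \ref{disj18}, else Proposition \ref{minim} is violated), $F'$ acquires two disjoint decompositions with one of them, $A'$, contained in $A$; since $A'\subset A$, the cubics through $A'$ still have no curve in their base locus, and one concludes by Kruskal when $\ell(A')\le 14$ and by Proposition \ref{base15} when $\ell(A')=15$ (and by the already-proved $r=16$ case when $r=17$ and $\ell(A')=16$) --- in effect an induction on $r$. Your ingredients can be reassembled into a correct alternative (B\'ezout on $Z$ gives $2r-j\le 27$, hence $j\ge 2r-27\ge 5$, hence the residuals have length at most $27-r\le 11$ and Kruskal applies to $F'$), but that is not the argument you wrote. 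A minor further slip: the dimension of the cubic system is $\dim(I_A)_3=20-r$ (so $4$ or $3$), not $r-10$; this does not affect the B\'ezout count, which only needs three independent cubics.
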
 
\begin{proof} The proof  is given only for $r=16$, since the other case is completely analogous.\\
Assume there exists a second decomposition $B$ of length $16$. If $A\cap B=\emptyset$, since the ideal of $Z=A\cup B$ coincides
with the ideal of $A$ in degree $3$,  by B\'ezout $Z$ has length at most $27$. Thus $\ell(B)\leq 11$, which is excluded by Proposition \ref{minim}.\\
If the intersection $A\cap B$ contains $j>0$ points, then as above write
  \begin{align*}
 F &= a_1L_1^6+\dots +a_jL_j^6+a_{j+1}L_{j+1}^6+\dots +a_{16}L_{16}^6 \\
 &= b_1L_1^6+\dots +b_jL_j^6+b_{j+1}M_{j+1}^6+\dots +b_{16}M_{16}^6.
 \end{align*}
Define $F'$ by
  \begin{align*}
 F' = (a_1-b_1)L_1^6+\dots +(a_j-b_j)L_j^6 &+a_{j+1}L_{j+1}^6+\dots +a_{16}L_{16}^6 \\
 &=b_{j+1}M_{j+1}^6+\dots +b_{16}M_{16}^6
 \end{align*}
 $F'$ has two disjoint decompositions, one for which $A'$ is contained in $A$. Thus the system of cubics through $A'$
 has no curves in the base locus. Even if the length of $A'$ is $15$, we have a contradiction with Proposition \ref{base15}.
\end{proof}
 
 Since for $r\leq 17$ and $A$ very general the system of cubics through $A$ has no curves in the base locus, the previous proposition excludes
 the existence of a second decomposition, except for sets $A$ contained in a Zariski closed subset of $(\Pj^3)^r$.

  \section{The case $r=18$}
  
  For $r=18$ and $A$ general, the base locus of the system of cubics through $A$ is a complete intersection
  curve $C$ of degree $9$ and genus $10$. There is no way to use a strategy similar to the statement of
  Proposition \ref{base16} in order to prove the identifiability of $F$.
  
  \begin{remark}\label{singul}
From Proposition \ref{base15} and Proposition \ref{base16} it turns out that, when $r=15,16,17$ and the
 system of cubics through $A$ has no curves in the base locus, then all forms $F$ in the span
 of $v_6(A)$ are identifiable of (Waring) rank $r$, unless   the decomposition $A$ is redundant for $F$, i.e. unless $F$ sits in the
 span of some strict subset of $v_6(A)$. 
\end{remark}

We can see immediately that the situation changes completely for $r=18$.

\begin{example}\label{18} Let $A$ be a \emph{general} set of $18$ points in $\Pj^3$. Then $A$ is contained in the complete intersection
of two cubics $G_1,G_2$. Consider the complete intersection curve $C=G_1\cap G_2$ and let $G$ be a general quartic not
containing $C$. \
The intersection of $C$ with the surface $G$ consists of $36$ points $Z=A\cup B$. $B$ is thus  a set of $18$ points in the curve $C$, disjoint from $A$.
By the Cayley-Bacharach property of complete intersections, one knows that $h^1_Z(6)> 0$. Thus by Proposition 2.19 of
\cite{AngeC20}, we know that $\langle v_6(A)\rangle$ and $\langle v_6(B)\rangle$ meet in some point $F$.
Such $F\in \langle v_6(A)\rangle$ has a second decomposition $B$ of length $18$.
\end{example}

\begin{remark}\label{angec}
By Proposition 3.9 of \cite{AngeC22}, when $A,B$ are disjoint decompositions of $F$, then the sum of the homogeneous ideals $I_A+I_B$
does not coincide with the polynomial ring $R$ in degree $6$, and $F$ is dual to  $I_A+I_B$
\end{remark}

Consider again the sets $A,B$ described in Example \ref{18}.\\
The ideal of $B$ can be found from $G$ and the ideal of $A$ as a result of the mapping cone process (see \cite{Ferrand75}).
By the Minimal Resolution Conjecture, which holds in $\Pj^3$ (see \cite{BallGer86}), a resolution of the ideal $I_A$ is given by 
$ 0\to R^8(-6)\to R^{18}(-5)\to R^2(-3)\oplus R^9(-4)\to I_A\to 0$. Combining with the Koszul complex of $G_1,G_2,G$ one obtains a diagram
 \begin{equation}\label{dia} \begin{matrix}
 0  & \to  & R(-10) & \stackrel{\alpha'}\rightarrow & R(-6)\oplus R^2(-7) & \stackrel{\beta'}\rightarrow  & R^2(-3)\oplus R(-4) &  \to  & I_Z  &\to & 0 \\
   & &   \gamma\downarrow & &        \gamma'\downarrow & &  \gamma''\downarrow &  &  & & \\
 0  & \to  & R^8(-6) & \stackrel{\alpha}\rightarrow & R^{18}(-5)  &  \stackrel{\beta}\rightarrow  & R^2(-3)\oplus R^9(-4) &\to  & I_A & \to & 0
 \end{matrix}\end{equation}
 where the map $\gamma''$ is defined by $G_1,G_2,G$.
From the diagram one obtains a resolution of $I_B$ by the dual of the mapping cone:
$$  0\to R^8(-6)\to R^{18}(-5)\stackrel{(\alpha\oplus\gamma')^\vee}\longrightarrow R^2(-3)\oplus  R^9(-4) \stackrel{(\alpha'\oplus\gamma)^\vee}\longrightarrow I_B\to 0 $$
Thus there is a standard way to compute $I_B$, hence $I_A+I_B$, from $I_A$ and $G$.
\medskip

We have then all the ingredients to study the existence of a second decomposition for $F$.

\begin{proposition}\label{disj18} Assume that the decomposition $A$ of length $18$ of $F$, satisfying condition $(*)$, also satisfies the following condition: 
for all subsets $A'\subset A$ of length $17$, the linear system of cubics through $A'$ has base locus of dimension $0$. Then any
other decompositions of length $18$ of $F$ is disjoint from $A$.
\end{proposition}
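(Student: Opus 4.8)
The plan is to argue by contradiction. Suppose $B$ is a decomposition of $F$ of length $18$ with $B\neq A$ and $\ell(A\cap B)=j\geq 1$; after reordering we may write $A\cap B=\{L_1=M_1,\dots,L_j=M_j\}$, so $1\le j\le 17$. The first move is the \emph{cancellation} already used in the proof of Proposition \ref{minim}: set
$$ F'=\sum_{i=1}^{j}(a_i-b_i)L_i^6+\sum_{i=j+1}^{18}a_iL_i^6=\sum_{i=j+1}^{18}b_iM_i^6 . $$
The left-hand side exhibits a non-redundant decomposition $A'\subseteq A$ of $F'$ with $18-j\le \ell(A')\le 18$, and the right-hand side a non-redundant decomposition $B'=\{M_{j+1},\dots,M_{18}\}$ of $F'$ of length $18-j$. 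Since $A\cap B=\{L_1,\dots,L_j\}$, the set $B'$ is disjoint from $A$, hence from $A'$, and $F'\neq 0$ because the $L_i^6$ are linearly independent.

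Next I would feed $F'$ back into Proposition \ref{minim}. Condition $(*)$ passes to the subset $A'\subseteq A$, so Proposition \ref{minim} gives $\ell(B')\ge \ell(A')$; together with $\ell(B')=18-j\le \ell(A')$ this pins down $\ell(A')=18-j$. In particular $a_i=b_i$ for $i=1,\dots,j$, so $A'=\{L_{j+1},\dots,L_{18}\}$, and we are in the equality case of Proposition \ref{minim}: the ideals of $A'$ and of $Z'=A'\cup B'$ agree up to degree $3$. As $A'$ and $B'$ are disjoint, $\ell(Z')=2(18-j)$.

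The decisive step is then a dichotomy on $j$. If $j\ge 4$, then $\ell(A')=18-j\le 14$, and the reshaped Kruskal criterion (Remark \ref{Kru}) makes $A'$ the unique decomposition of $F'$ of length $\ell(A')$, contradicting $A'\neq B'$ (the two are disjoint and non-empty). If $1\le j\le 3$, then $15\le \ell(A')\le 17$ and so $\ell(Z')=2(18-j)\ge 30$; on the other hand, choosing a subset $A'''$ with $A'\subseteq A'''\subseteq A$ and $\ell(A''')=17$, the hypothesis says the system of cubics through $A'''$ has $0$-dimensional base locus, and since $(I_{A'''})_3\subseteq (I_{A'})_3$ the base locus of the system of cubics through $A'$ is contained in it, hence is $0$-dimensional as well. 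Because $(I_{Z'})_3=(I_{A'})_3$, the set $Z'$ lies in this base locus, so $\ell(Z')\le 27$ by B\'ezout, exactly as in the proof of Proposition \ref{base15}. This contradicts $\ell(Z')\ge 30$. (Alternatively, for $15\le \ell(A')\le 17$ one may simply invoke Proposition \ref{base15} or Proposition \ref{base16}, which give directly that $A'$ is the unique minimal decomposition of $F'$.) In either regime $j$ cannot be positive, so $B$ is disjoint from $A$.

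The main obstacle is the bookkeeping around the equality case of Proposition \ref{minim}: one must be certain that $\ell(A')$ equals $18-j$ \emph{exactly}, so that the leftover coefficients $a_i-b_i$ all vanish and — what really matters — the ideals of $A'$ and $Z'$ coincide \emph{exactly} in degree $3$, since this is what licenses the B\'ezout estimate $\ell(Z')\le 27$. A secondary point of care is the transfer of the hypothesis, which concerns subsets of $A$ of length $17$, to the set $A'$, whose length may be $15$ or $16$; this is immediate from the monotonicity of base loci under adding points, but it is where the precise statement is used. Everything else is numerology: $2(18-j)\ge 30>27$ when $j\le 3$, while $18-j\le 14$ when $j\ge 4$, so the two regimes jointly dispose of every $j\ge 1$.
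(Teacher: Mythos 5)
Your proof follows the paper's argument essentially verbatim: the same cancellation producing $F'$, the same appeal to Proposition \ref{minim} to force $a_i=b_i$ (and the degree-$3$ ideal equality), and the same final contradiction via the reshaped Kruskal criterion when $\ell(A')\le 14$ or via Propositions \ref{base15} and \ref{base16} when $\ell(A')\in\{15,16,17\}$. The only difference is that you make explicit the monotonicity step transferring the length-$17$ hypothesis on base loci to the shorter set $A'$, which the paper leaves implicit.
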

\begin{proof} Assume there exists another decomposition $B$ of length $18$ with $\ell(A\cap B)=j>0$. Then arguing as in the proof of 
Proposition \ref{minim} one finds another sextic form $F'$ with decompositions
  \begin{align*}
 F' = (a_1-b_1)L_1^6+\dots +(a_j-b_j)L_j^6 &+a_{j+1}L_{j+1}^6+\dots +a_{18}L_{18}^6 \\
 &=b_{j+1}M_{j+1}^6+\dots +b_{18}M_{18}^6,
 \end{align*}
 where $A=\{L_1,\dots,L_{18}\}$ and $a_i,b_i\neq 0$ for all $i$. If some coefficient  $a_i-b_i$, $i=1,\dots,j$, is non-zero,
 then the second decomposition of $F'$ has length smaller than the first one, which is contained in $A$. We get a contradiction 
 with Proposition \ref{minim}. Thus $a_i=b_i$ for all $i=1,\dots,j$. But then $F'$ has two disjoint decompositions of length $18-j$,
 and one of them $A'=\{L_{j+1},\dots,L_{18}\}$ is contained in $A$. By assumption the system of cubics through $A'$
 has no curves in the base locus. Then we get a contradiction with either the Reshaped Kruskal's Criterion, or
 Proposition \ref{base15}, or Proposition \ref{base16}.
\end{proof}

\begin{theorem}\label{main} Let $F$ be a sextic in 4 variables, with a non-redundant decomposition $A$ of length $18$.
Assume that $A$ satisfies the following properties.
\begin{itemize}
\item[(*)] $A$ is in  General Position;
\item[(**)]  for all subsets $A'\subset A$ of length $17$, the linear system of cubics through $A'$ has base locus of dimension $0$;
\item[(***)]  the base locus of the pencil of cubics through $A$ is an irreducible curve $C$. 
\end{itemize}
Then $A$ is minimal, and any other decomposition $B$ of length $18$ of $F$ (if any)
is disjoint from $A$, and $Z=A\cup B$ is a complete intersection of surfaces of degrees $3,3,4$.
\end{theorem}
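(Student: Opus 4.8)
The plan is to combine the three hypotheses to pin down both the minimality of $A$ and the complete intersection structure of $Z = A \cup B$. First I would dispose of minimality: by Proposition \ref{minim} applied with $r = 18 \leq 20$, any second decomposition $B$ satisfies $\ell(B) \geq 18$, so $A$ is minimal and we may assume $\ell(B) = 18$. By Proposition \ref{disj18}, hypothesis $(**)$ forces $A \cap B = \emptyset$, so $Z = A \cup B$ has length $36$ and, by Remark \ref{CB}, satisfies the Cayley-Bacharach property. Also, by the last sentence of Proposition \ref{minim} (applied in the equality case $s = r$), the ideals of $A$ and $Z$ agree up to degree $3$; since $A$ is general of length $18$, the degree-$3$ part of $I_A$ is the pencil of cubics cutting out the complete intersection curve $C$ of degree $9$ and arithmetic genus $10$ from hypothesis $(***)$. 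Hence $Z$ is contained in $C$, and in fact $I_Z$ and $I_A$ have the same degree-$3$ piece, namely a $2$-dimensional space of cubics defining $C$.

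Next I would locate $Z$ on the curve $C$. Since $C$ is an irreducible curve of degree $9$ and $Z \subset C$ has length $36$, any surface of degree $d$ containing $Z$ but not $C$ would meet $C$ in at least $36$ points, contradicting B\'ezout for $d = 3$ (giving $27 < 36$). So every cubic through $Z$ contains $C$: $(I_Z)_3 = (I_C)_3$, a $2$-dimensional space, consistent with the above. For quartics: $h^0(\mathcal{O}_{\Pj^3}(4)) = 35$, and the cubics through $C$ contribute a subspace of $(I_C)_4$ of dimension $2 \cdot h^0(\mathcal{O}_{\Pj^3}(1)) = 8$; the full space $(I_C)_4$ has dimension $h^0(\mathcal{I}_C(4)) = 35 - h^0(\mathcal{O}_C(4))$, and since $C$ is arithmetically Cohen--Macaulay (it is a complete intersection) $h^0(\mathcal{O}_C(4)) = \chi(\mathcal{O}_C(4)) = 4 \cdot 9 + 1 - 10 = 27$, so $h^0(\mathcal{I}_C(4)) = 8$: every quartic through $C$ already contains the two cubics' worth, i.e. $(I_C)_4$ is exactly the $8$-dimensional space spanned by $\{x_i G_j\}$. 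The key point is then to show $Z$ imposes only $h_Z(4) = 36 - h^1_Z(4)$ conditions on quartics with $h^1_Z(4) > 0$, so that there is a quartic $G \in (I_Z)_4 \setminus (I_C)_4$, i.e. a quartic through $Z$ not containing $C$. This is where Cayley--Bacharach enters: $h^1_Z(6) > 0$ is needed for $\langle v_6(A)\rangle \cap \langle v_6(B)\rangle \neq \emptyset$ (Remark \ref{CB} / Proposition 2.19 of \cite{AngeC20}), and I would propagate this back to degree $4$ using the self-duality of Cayley--Bacharach sets together with the Hilbert function constraints $Dh_Z$ forced by $Dh_A$ and the complete intersection structure — this is the analysis already sketched via the mapping cone diagram \eqref{dia}, which shows $I_Z$ has the resolution of a complete intersection of type $(3,3,4)$ precisely when such a $G$ exists.

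Granting the existence of a quartic $G \supset Z$ with $C \not\subset G$, the curve section $C \cap G$ is a zero-dimensional scheme of length $\deg C \cdot \deg G = 9 \cdot 4 = 36$ by B\'ezout (using irreducibility of $C$ from $(***)$ to ensure the intersection is proper). Since $Z \subseteq C \cap G$ and $\ell(Z) = 36 = \ell(C \cap G)$, we get $Z = C \cap G = G_1 \cap G_2 \cap G$ scheme-theoretically, which is exactly the assertion that $Z$ is a complete intersection of surfaces of degrees $3, 3, 4$. I would then note that this also re-proves disjointness consistency: $A$ and $B$ are linked in this complete intersection, which matches the mapping cone computation of $I_B$ from $I_A$ and $G$ given before the theorem.

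The main obstacle I expect is the middle step: guaranteeing a quartic through $Z$ not containing $C$, equivalently showing $h^1_Z(4) > 0$ forces $(I_Z)_4 \supsetneq (I_C)_4$. One direction is immediate — if $(I_Z)_4 = (I_C)_4$ then $Z$ lies on no quartic beyond those through $C$ — but one must actually produce the extra quartic, and a priori $Z$ (cut from $C$ by an unknown surface) need not be the section of a quartic. The clean resolution is to run the argument backwards from the resolution of $I_A$ provided by the Minimal Resolution Conjecture (valid in $\Pj^3$, \cite{BallGer86}): the mapping cone diagram \eqref{dia} exhibits a candidate $I_Z$ with a $(3,3,4)$-complete-intersection resolution, and Proposition 3.9 of \cite{AngeC22} (Remark \ref{angec}) ties the existence of the decomposition $B$ to $(I_A + I_B)_6 \neq R_6$, i.e. to $h^1_Z(6) > 0$, which by the complete intersection Hilbert function is automatic for a $(3,3,4)$-complete intersection of $36$ points. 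So the logical structure is: hypotheses $(*), (**), (***)$ plus Propositions \ref{minim} and \ref{disj18} force $Z$ disjoint with $I_A = I_Z$ in degree $3$ and $Z \subset C$; the resolution analysis forces $Z$ to be the residual of $A$ in a $(3,3,4)$ complete intersection; and B\'ezout then identifies $Z$ with $C \cap G$ on the nose. I would present the resolution/mapping-cone step as the technical heart, citing \eqref{dia} and \cite{Ferrand75, BallGer86}, and keep the B\'ezout bookkeeping terse.
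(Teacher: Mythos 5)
Your reduction steps are exactly right and match the paper's: Proposition \ref{minim} gives minimality and the agreement of $I_A$ and $I_Z$ in degree $3$, Proposition \ref{disj18} gives disjointness, so $Z\subset C$ with $\ell(Z)=36$, and the whole theorem comes down to producing a quartic through $Z$ not containing $C$, after which B\'ezout identifies $Z$ with $C\cap G$ as a $(3,3,4)$ complete intersection. You correctly flag this middle step as the crux. But the argument you commit to for the crux does not close the gap. The mapping-cone diagram \eqref{dia} is built starting from a quartic $G$ that is already assumed to contain $Z$ and not $C$: it computes $I_B$ once the complete intersection structure is known, so ``running it backwards'' to exhibit $G$ is circular. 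Likewise, noting that a $(3,3,4)$ complete intersection of $36$ points automatically has $h^1_Z(6)>0$ proves the converse of what is needed. (A further issue: the resolution of $I_A$ coming from the Minimal Resolution Conjecture is stated for a \emph{general} set of $18$ points, and hypotheses $(*)$--$(***)$ do not obviously guarantee it.)

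The step you leave open has a short direct proof, which is what the paper does. Suppose every quartic containing $Z$ contains $C$. Then $(I_Z)_4=(I_C)_4$ is the $8$-dimensional space of quartics composed with the pencil, so $h_Z(4)=35-8=27$ and $Dh_Z(4)=27-18=9$. Since $\ell(Z)=36$, this forces $Dh_Z(5)+Dh_Z(6)+Dh_Z(7)\le 36-27=9$, while the Cayley--Bacharach inequality of Remark \ref{CB} (applicable because $A,B$ are non-redundant and disjoint) requires $Dh_Z(5)+Dh_Z(6)+Dh_Z(7)\ge Dh_Z(0)+Dh_Z(1)+Dh_Z(2)=10$. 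This contradiction produces the quartic $G\supset Z$ with $C\not\subset G$, and your concluding B\'ezout step then finishes the proof. You do mention, in passing, propagating $h^1_Z(6)>0$ back to degree $4$ via Cayley--Bacharach and the Hilbert function constraints -- that is the right route; you need to carry out this one-line count rather than defer to the resolution and mapping-cone machinery, which in this theorem plays no logical role.
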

\begin{proof} The unique thing that remains to prove is the last assertion, i.e. that $A\cup B$ is the intersection of $C$ with a quartic surface.\\
If $B$ exists, $Z=A\cup B$ lies in the pencil of cubics containing $A$, by Proposition \ref{minim}. If all the quartics containing
$Z$ are composed with the pencil, then $h_Z(4)= 35-8=27$, so that $Dh_Z(4)=9$. But then
$Dh_Z(5)+Dh_Z(6)+Dh_Z(7)\leq  9<Dh_Z(2)+Dh_Z(1)+Dh_Z(0)$, which contradicts the Cayley-Bacharach property.
hence there is a quartic containing $Z$ and not $C$. The claim follows.
\end{proof}

\begin{remark} For a given form $F$ and a decomposition $A$ of length $18$, one can produce a procedure
which tests if $A$ is unique i.e. if $F$ is identifiable of rank $18$, as follows.\\
\begin{enumerate}
\item Control if $A$ is in $GP$.
\item Control that the system of cubics through any subset of length $17$ of $A$ has $0$-dimensional base locus.
\item Control that the system of cubics through $A$ has an irreducible nonic curve as base locus.
\item Consider a linear space $W$ in $(I_A)_4$ orthogonal to the $8$-dimensional subspace spanned by the cubics through $A$.
\item For all $G\in W$ compute the generators of the residue $B$ of $A$ in $G\cap C$, in terms of coordinates of $G\in W$.
\item Prove that for no choice of the coordinates of $G$ the form $F$ is dual to $I_A+I_B$.
\end{enumerate}
Notice that the generators of $I_B$, mod the cubics containing $C$, are $9$ quartics, by the resolution following diagram \ref{dia}.\\
One of the most expensive points in the procedure is step (1), which requires to control that none of the $\binom{18}8=43,758$ subsets of length $10$
of $A$ is contained in quadrics. 
\end{remark}

%\nocite{*}
%\bibliographystyle{amsplain}
%\bibliography{SixteenTS.bib}

%\end{document}
%%%%%%%%%%%%%%%%%%%%%%%%%%%%%%%%%%%%%%%%%%%%%%

\end{document}
%%%%%%%%%%%%%%%%%%%%%%%%%%%%%%%%%%%%%%%%%%%%%% 